\newtheorem{thm}{Theorem}
\newtheorem{lemma}[thm]{Lemma}
\newtheorem{prop}[thm]{Proposition}
\newtheorem{cor}[thm]{Corollary}
\newcommand{\Nb}{\mathbb{N}}
\newcommand{\Fb}{\mathbb{F}}
\newcommand{\Pb}{\mathbb{P}}
\newcommand{\PP}{\mathcal{P}}
\newcommand{\LL}{\mathcal{L}}
\newcommand{\GG}{\mathcal{G}}
\newcommand{\HH}{\mathcal{H}}
\newcommand{\Mc}{\mathcal{M}}
\newcommand{\GL}{\operatorname{GL}}
\newcommand{\Kerr}{\operatorname{Ker}}
\newcommand{\kMag}{\operatorname{-Mag}}
\newcommand{\rMag}{r\operatorname{-Mag}}
\newcommand{\X}{\boldsymbol{X}}
\newcommand{\bB}{\boldsymbol{b}}
\newcommand{\sB}{\boldsymbol{s}}
\begin{document}
\title{On Magic Finite Projective Space}
\markright{Magic Finite Projective Space}
\author{David Nash and Jonathan Needleman}
\begin{keyword}
magic squares, finite projective space, incidence geometry, modular representation theory
\end{keyword}
\begin{abstract}
This paper studies a generalization of magic squares to finite projective space $\Pb^n(q)$.  We classify at all functions from $\Pb^n(q)$ into a finite field where the sum along any $r$-flat is $0$.  In doing so we show connections to elementary number theory and the modular representation theory of $\GL(n,q)$.
\end{abstract}

\maketitle

\section{Introduction.}
This paper investigates a generalization of magic squares to finite projective space.  A magic square can be viewed as an injective $\Nb$-valued function on a square grid where summing along any horizontal, vertical or main diagonal line will produce the same sum.   Small \cite{Small} enlarged this definition of magic square by looking at $k$-valued functions on a square grid for some field $k$.  He also removed the restriction that the functions be injective.  In doing so, Small showed that the dimension of magic squares over a given field $k$ is independent of $k$ when $n\geq 5$ and depends on the characteristic of $k$ for small $n$.  The description of how the dimension is dependent on the characteristic is given in \cite{Hou}.  It is natural to extend Small's work to finite projective planes $\Pi$ of order $n$, as the plane naturally comes with lines to sum over.  In \cite{NN} we show that no non-constant functions $f:\Pi\rightarrow k$ exist unless the characteristic of $k$ divides $n$.


This paper extends our previous work to finite projective space.  For finite projective space we could require that all functions sum to the same value along each of the lines, however, there are other natural possibilities.  For a field  $k$ and a $k$-valued function $f$ on a finite projective space, we call $f$ $r$-magic if it is injective and the sum of $f$ over any $r$-dimensional subspace ($r$-flat) is a constant $c$ independent of the $r$-flat.  Such a $c$ we call the magic sum.  This definition is a little too restrictive because $r$-magic functions do not form an algebraic structure.  Instead we weaken our definition to $r$-pseudomagic functions by removing the injectivity condition.   These functions form a vector space and so will be easier to study.   This differs from the terminology of Smith because we want to reserve magic functions to align with the historic term where magic objects have distinct values.  We first will show for a projective space defined over a finite field $\Fb_q$, non-constant $r$-pseudomagic functions into a field $k$ exist only if $\operatorname{char}(k)|q$.   The remainder of the paper is dedicated to studying the vector space of $\Fb_q$-valued, $r$-pseudomagic functions, for projective space defined over $\Fb_q$.  We give a complete combinatorial classification all such functions as polynomials.

This problem can be viewed in terms of the $0$-$1$ incidence matrix, $A_r$, of a projective space, which encodes the incidence structure of the points and $r$-flats in the projective space.  Historically, when viewed over a characteristic $p$ field, this matrix plays a crucial role in coding theory, and its $p$-rank has been studied extensively.  For projective planes this rank is calculated by Goethals and Delsarte \cite{Goethals}.  The result was generalized independently (and using different techniques) to the $p$-rank of the incidence matrix for points and hyperplanes in projective space by Graham and MacWilliams \cite{Graham} and Smith \cite{Smith}.  Shortly thereafter Hamada generalized this further to the incidence matrix $A_r$ \cite{Hamada}.  This rank formula is now known as Hamada's formula.  Despite all this work being completed in the late 1960s it is still an open question to determine the $p$-rank of general incidence matrices for $r$-flats and $s$-flats of projective space.

In 2000, Bardoe and Sin \cite{Bardoe} used the modular representation theory of $\GL_n(q)$ in order to better understand, and reprove, Hamada's formula.  In doing so they give a combinatorial description of the representation of $\GL_n(q)$ acting on functions over projective space.  We use Bardoe and Sin's classification to study the $r$-pseudomagic functions, which happen to be closely related to $\Kerr A_{r}$.  This analysis helps give a geometric interpretation of Bardoe and Sin's combinatorial parameters of the representations.

\section{Finite projective space and psuedomagic functions.}\label{projspace}
In this section we set notation and introduce finite projective space.  Fix $q=p^t$ a prime power, $\Fb_q$ a finite field of $q$ elements, and $K$ an algebraic completion of $\Fb_q$.  We let $V(q)$ be an $n+1$ dimensional vector space over $\Fb_q$ with standard basis $e_0,\ldots, e_n$, and let $V=V(q)\otimes K$ be an extension of $V(q)$ to an $n+1$ dimensional vector space over $K$.   We define $\Pb^n(q)=(\LL_0,\ldots, \LL_{n-1})$ to be the projective space of dimension $n$ defined over the field $\Fb_q$ with $\LL_r$ the set of $r+1$-dimensional subspaces of $V(q)$ which we call \emph{$r$-flats}.  In the special case of $r=0$ we call $\PP=\LL_0$ the points of $\Pb^n(q)$.  Projective space $\Pb^n(q)$ comes equipped with an incidence relation.  Subspaces $W_1, W_2 \subset V(q)$ are said to be incident, if $W_1\subset W_2$ or $W_2\subset W_1$, in this case we write  $W_1\sim W_2$.

Our interest was sparked by a desire to understand ``labelings" of the points in $\Pb^n(q)$ which are magic for $r$-flats.  That is, we are looking to understand functions $f:\PP\rightarrow G$, for some Abelian group $G$, that satisfy two criteria:
\begin{enumerate}
    \item[1)] There is a $c\in G$ so that for all $W_r\in\LL_r$,  $\sum_{p\sim W_r} f(p)=c$.
    \item[2)] $f$ is injective.
\end{enumerate}

Functions satisfying both conditions we call \emph{$r$-magic}, and those that satisfy condition 1) we call \emph{$r$-pseudomagic}.  In \cite{NN} we show that there are no non-trivial (non-constant) 1-pseudomagic functions for finite projective planes $\Pb^2(q)$ if $G$ is a torsion-free group.  We further show that there are no non-trivial 1-pseudomagic functions for $\Pb^2(q)$ unless $G$  contains a cyclic group of order $p$.
 We will let $\rMag(\Pb^n(q),G)=\{f:\PP\rightarrow G \mid f \text{ is $r$-pseudomagic}\}$. The functions where the magic sum is $0$, are denoted by $\rMag_0(\Pb^n(q), G)$.

The rest of this section is dedicated to showing that $\rMag(\Pb^n(q), G)$ only contains constant functions, denoted by $\langle1\rangle$, if $G$ does not contain a subgroup of order $p$.   Note that we instead prove the equivalent statement that if $G$ does not contain a subgroup of order $p$, then $\rMag_0(\Pb^n(q), G)=\{0\}$.  To prove this result we use two main ideas.  First, we show the result is true for hyperplanes ($(n-1)$-flats), and then we extend the result to arbitrary $r$-flats.

\begin{lemma}\label{hyper_res}  Let $G$ be an Abelian group and let $n\geq 2$.  If $(n-1)\kMag(\Pb^n(q),G)=\langle1\rangle$  then $(n-1)\kMag(\Pb^m(q),G)=\langle1\rangle$ for all $m\geq n$. \end{lemma}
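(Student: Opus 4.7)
The plan is to reduce the $\Pb^m(q)$ case to the hypothesized $\Pb^n(q)$ case by restricting to $n$-flats, then to patch the resulting constant restrictions into a global constant. The base case $m=n$ is the hypothesis itself, so I may assume $m>n$.

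First, I would fix $f \in (n-1)\kMag(\Pb^m(q), G)$ with magic sum $c$, and let $W_n\in\LL_n$ be an arbitrary $n$-flat of $\Pb^m(q)$, corresponding to an $(n+1)$-dimensional subspace $W\subseteq V(q)$. The points of $\Pb^m(q)$ lying on $W_n$, together with the $(n-1)$-flats of $\Pb^m(q)$ lying inside $W_n$, form a copy of $\Pb^n(q)$. The key geometric observation is that every $(n-1)$-flat internal to this sub-space is an $n$-dimensional $\Fb_q$-subspace of $V(q)$ contained in $W$, hence simultaneously an $(n-1)$-flat of the ambient $\Pb^m(q)$. Therefore the restriction of $f$ to the points of $W_n$ is an $(n-1)$-pseudomagic function on $\Pb^n(q)$ with the same magic sum $c$, and by hypothesis this restriction must be constant.

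Second, for the patching: given any two distinct points $p_1, p_2\in\PP$, the $2$-dimensional subspace of $V(q)$ they span can be extended to some $(n+1)$-dimensional subspace $W\subseteq V(q)$, which is possible because $n+1\leq m+1$. Both $p_1$ and $p_2$ then lie in the corresponding $n$-flat $W_n$, on which $f$ is constant by the first step, so $f(p_1)=f(p_2)$. Since any constant function is visibly $(n-1)$-pseudomagic, we conclude that $(n-1)\kMag(\Pb^m(q),G)=\langle 1\rangle$.

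The main obstacle---really a point to verify rather than a deep difficulty---is confirming that the flat structure restricts correctly: the $(n-1)$-flats internal to the sub-projective-space on $W_n$ must coincide with those $(n-1)$-flats of $\Pb^m(q)$ that happen to lie inside $W_n$. This follows immediately from the definition of $r$-flats as $(r+1)$-dimensional $\Fb_q$-subspaces of $V(q)$, but it is the pivotal ingredient that allows the hypothesis on $\Pb^n(q)$ to be invoked inside the larger space. Apart from this, the argument is just linear algebra on subspace extensions.
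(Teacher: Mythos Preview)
Your proof is correct and follows essentially the same approach as the paper: restrict $f$ to $n$-flats, invoke the hypothesis to get constancy on each, and then patch by putting any two points in a common $n$-flat. The paper phrases the patching step slightly differently (comparing two arbitrary $n$-flats via a third containing a point from each), but your direct version is equivalent and arguably cleaner.
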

\begin{proof}
Let $f\in(n-1)\kMag(\Pb^m(q),G)$.  For any $n$-flat $W \subset \Pb^m(q)$, $W$ is isomorphic to $\Pb^n(q)$, 
hence $f$ restricted to $W$, $f|_W$, is the constant function $g$ for some $g \in G$. Let $W_1$, $W_2 \subset \Pb^m(q)$ be $n$-flats.  Let $p_i$ be a point in $W_i$ and let $V$ be any $n$-flat containing $p_1$, $p_2$.  Then $f|_{W_1}=f|_V=f|_{W_2}=g$ for some $g\in G$.  Since $W_1$, $W_2$ are arbitrary  $f=g$.
\end{proof}

\begin{prop}\label{hyper_mag}
If $G$ has no subgroups of order $p$ then $(n-1)\kMag(\Pb^n(q),G)=\langle1\rangle$ for all $n \geq 2$.
\end{prop}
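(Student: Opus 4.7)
The plan is to proceed by induction on $n$. The case $n = 2$ is the base case and coincides with the result of \cite{NN} recalled earlier: when $G$ has no element of order $p$, the only $1$-pseudomagic functions on $\Pb^2(q)$ are constants.

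For the inductive step, assume the proposition for $\Pb^{n-1}(q)$ and let $f : \PP \to G$ be $(n-1)$-pseudomagic on $\Pb^n(q)$ with magic sum $c$. The main technical move is a double-counting identity on $(n-2)$-flats. An $(n-2)$-flat $W$ is contained in exactly $q+1$ hyperplanes $H_0, \ldots, H_q$ of $\Pb^n(q)$ (they correspond to the points of $\Pb^1(q)$ in the two-dimensional quotient $V(q)/\tilde W$, where $\tilde W$ is the underlying linear subspace). Every point of $W$ lies in all $q+1$ of these hyperplanes, while every point outside $W$ lies in exactly one of them. Writing $T_W = \sum_{p \in W} f(p)$ and $S = \sum_{p \in \Pb^n(q)} f(p)$ and summing the pseudomagic condition over $H_0, \ldots, H_q$, I would obtain
\[
(q+1)\,c \;=\; \sum_{i=0}^{q}\, \sum_{p \in H_i} f(p) \;=\; q\,T_W + S,
\]
so that $q\,T_W = (q+1)c - S$ is independent of $W$.

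The hard part is the transfer from $qT_W$ being constant to $T_W$ itself being constant. This is precisely where the hypothesis enters: since $G$ has no element of order $p$, multiplication by $p$, and hence by $q = p^t$, is injective on $G$, forcing $T_W$ to be the same group element for every $(n-2)$-flat $W$. It then follows that for any hyperplane $H \cong \Pb^{n-1}(q)$, the restriction $f|_H$ is $(n-2)$-pseudomagic on $H$, and the inductive hypothesis forces $f|_H$ to be constant. A short gluing step finishes: any two hyperplanes in $\Pb^n(q)$ intersect in an $(n-2)$-flat and hence share a point, so their constant values agree, and any two points of $\Pb^n(q)$ lie in a common hyperplane, making $f$ globally constant. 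The combinatorial identity $(q+1)c = qT_W + S$ is essentially free once the incidence counts through $W$ are written down, so the real substance of the argument is that the assumption on $G$ lets one divide by $q$ — an obstruction that genuinely fails in characteristic $p$, matching the fact that nontrivial $r$-pseudomagic functions are expected precisely there.
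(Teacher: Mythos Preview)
Your proof is correct and follows essentially the same route as the paper: induction on $n$ with base case from \cite{NN}, the same double-counting over the $q+1$ hyperplanes through an $(n-2)$-flat to get $qT_W=(q+1)c-S$, and the same use of injectivity of multiplication by $q$ on $G$ to conclude that $f$ is $(n-2)$-pseudomagic. The only cosmetic difference is that the paper packages your final restrict-to-a-hyperplane-and-glue step as Lemma~\ref{hyper_res} and invokes it, whereas you write that argument out inline.
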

\begin{proof}
We proceed by induction on $n$ knowing that the statement is true when $n=2$ by \cite{NN}.  Assume $(r-1)\kMag(\Pb^r(q),G)=\langle1\rangle$.  By Lemma~\ref{hyper_res} we have $(r-1)\kMag(\Pb^m(q),G)=\langle1\rangle$ for $m\geq r$.  Let $V\in \Pb^{r+1}(q)$ be an $(r-1)$-flat, and let
\[\LL_r(V)=\{W\in \Pb^{r+1}(q) \mid W \text{ is an  $r$-flat and } W\sim V\}\]
Note that $|\LL_r(V)|=q+1$ because through dualizing this is the same as the number of points on a line.  Observe, for $f\in\rMag(\Pb^{r+1}(q),G)$
\[\sum_{W\in\LL_r(V)} f(W)=\sum_{p\not\in V}f(p)+(q+1)\sum_{p\in V}f(p)=\sum_{p\in \Pb^{r+1}(q)}f(p)+q\sum_{p\in V}f(p)\]
Since $f$ is $r$-pseudomagic and the sum is in terms of $r$-flats it is independent of $V$.  Comparing to another $(r-1)$-flat $V'\in \Pb^{r+1}(q)$ we have
\[q\sum_{p\in V}f(p)=q\sum_{p\in V'}f(p).\]
Since $G$ contains no subgroup of order $p$ the map $h\mapsto qh$ for $h\in G$ has a trivial kernel, and so we conclude $\sum_{p\in V}f(p)$ is independent of the choice of $(r-1)$-flat $V$.  That is, $f\in (r-1)\kMag(\Pb^{r+1}(q),G)$ and so $f$ is a constant function.
\end{proof}

\begin{cor}
If $G$ has no subgroups of order $p$ then $\rMag_0(\Pb^n(q),G)=\{0\}$.
\end{cor}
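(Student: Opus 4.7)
The plan is to reduce to the hyperplane case handled by Proposition~\ref{hyper_mag} via restriction to $(r+1)$-flats and then glue as in Lemma~\ref{hyper_res}.  Let $f \in \rMag_0(\Pb^n(q),G)$; the $r = 0$ case is immediate (the magic sum over a singleton is $f(p)$, which must vanish), so assume $r \ge 1$.  For any $(r+1)$-flat $V \subset \Pb^n(q)$ we have $V \cong \Pb^{r+1}(q)$, and the $r$-flats of $\Pb^n(q)$ contained in $V$ are precisely the hyperplanes of $V$.  Hence $f|_V$ sums to $0$ over every hyperplane of $V$, so $f|_V$ is an $r$-pseudomagic function on $\Pb^{r+1}(q)$ with magic sum $0$, and Proposition~\ref{hyper_mag} forces $f|_V$ to be constant.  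Mimicking Lemma~\ref{hyper_res}, any two points of $\Pb^n(q)$ lie in a common $(r+1)$-flat (take any $(r+1)$-flat containing the line through them), so these constants agree and $f \equiv g$ is globally constant.

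Summing $f \equiv g$ over any $r$-flat then yields $\tfrac{q^{r+1}-1}{q-1}\,g = 0$.  Since $1 + q + \cdots + q^r \equiv 1 \pmod{p}$ this coefficient is coprime to $p$, so combined with the no-$p$-torsion hypothesis on $G$ one concludes $g = 0$, as required.

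I expect the final step to be the main obstacle, since controlling ``coprime-to-$p$ coefficient'' against ``no $p$-torsion on $G$'' needs some care.  A safer alternative, closer in spirit to the proof of Proposition~\ref{hyper_mag} itself, is an iterated pencil descent: from $s$-pseudomagic with magic sum $0$, summing over the pencil $\LL_s(V)$ for two $(s-1)$-flats $V$ and $V'$ gives an identity of the form $q \cdot \tfrac{q^{n-s}-1}{q-1}\bigl(\sum_{p\in V} f(p) - \sum_{p\in V'} f(p)\bigr) = 0$; peeling off the factor $q = p^t$ via the hypothesis and stepping down to $s = 0$ produces $f \equiv 0$ directly, bypassing the need to handle the global constant at the end.
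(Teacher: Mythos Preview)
Your argument up through ``$f$ is globally constant'' is correct and is exactly what the paper's one-line proof does: Proposition~\ref{hyper_mag} (in ambient dimension $r+1$) gives $r\kMag(\Pb^{r+1}(q),G)=\langle 1\rangle$, and then Lemma~\ref{hyper_res} (with the $n$ there taken to be $r+1$) pushes this up to $r\kMag(\Pb^m(q),G)=\langle 1\rangle$ for all $m\ge r+1$. So every $f\in\rMag_0(\Pb^n(q),G)$ is a constant $g$.

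The step you flagged as the ``main obstacle'' is, however, a genuine gap, and it cannot be repaired: the corollary as written is in fact false. From $f\equiv g$ with magic sum $0$ you obtain $\bigl(1+q+\cdots+q^r\bigr)g=0$, but ``no subgroup of order $p$'' only gives injectivity of multiplication by $p$, not by an arbitrary integer coprime to $p$. Concretely, take $q=p=2$, $r=2$, $n\ge 3$, and $G=\Zb/7\Zb$: then $G$ has no element of order $2$, yet the constant function $1$ lies in $2\kMag_0(\Pb^n(2),\Zb/7\Zb)$ since every $2$-flat has $7$ points. Your alternative pencil descent hits the same wall: after peeling $q=p^t$ off the coefficient $q\cdot\frac{q^{n-s}-1}{q-1}$, the residual factor $\frac{q^{n-s}-1}{q-1}$ need not act injectively on $G$, so the descent already stalls at the first step whenever $r\le n-2$; and even if one could descend, the argument only shows the $(s-1)$-flat sums are all \emph{equal}, not that they are zero, so at $s=0$ you are back to a nonzero constant you cannot kill. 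What both your argument and the paper's proof actually establish is $\rMag(\Pb^n(q),G)=\langle 1\rangle$, which is the statement the paper goes on to use; the passage to $\rMag_0=\{0\}$ is a slip in the statement of the corollary itself.
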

\begin{proof}
This follows from Proposition~\ref{hyper_mag} applied to Lemma~\ref{hyper_res}.
\end{proof}

Note that for a field $K$ these results say that $\rMag(\Pb^n(q),K)=\langle 1 \rangle$ if the characteristic of $K$ does not divide $q$, so the interesting case  is when the characteristic of $K$ and $q$ are not relatively prime.  The remainder of this paper we study $\rMag(\Pb^n(q),\Fb_q)$ which for brevity we will just write as $\rMag(\Pb^n(q))$.  Similarly we let $\rMag_0(\Pb^n(q),\Fb_q)=\rMag_0(\Pb^n(q))$.

The incidence matrix of $\Pb^n(q)$ is closely related to $\rMag_0(\Pb^n(q))$.  Using the incidence relation we define a series of incidence matrices  $A_r$ for $0<r<n$  as follows.  The columns of $A_r$ are indexed by the points $\PP$ and the rows are indexed by the $r$-flats $\LL_r$.  Then for $p\in \PP$ and $W\in\LL_r$ the $(W, p)$ location is $1$ if $p$ and $W$ are incident, and $0$ otherwise.  By viewing any function $f:\PP \to \Fb_q$ as a column vector we may apply $A_r$ as an operator on functions
\[A_r: \{f:\LL_0 \rightarrow \Fb_q\} \longrightarrow \{g:\LL_r\rightarrow \Fb_q\},\]
\[\text{where~} (A_r \cdot f)(W)=\sum_{\substack{p\in \LL_0 \\ p\sim W}} f(p).\]
This perspective brings about the insight that $r$-pseudomagic functions (with magic sum zero) correspond to the kernels of the incidence matrices $A_r$, i.e. $\Kerr(A_r)=\rMag_0(\Pb^n(q))$.

\section{Polynomials, Representation Theory, and Kummer's Binomial Theorem.}
To study $r$-pseudomagic functions we first need to understand the functions  $K[\Pb^n(q)] = \{f:\PP\rightarrow K\}$.  These can be characterized as polynomials in $n+1$ variables with  certain restrictions.  We choose coordinates $X_0, \ldots, X_n$ for $V$ that are dual to the standard basis and look at which monomials $m\in K[X_0, \ldots, X_n]$ restrict to functions on $\Pb^n(q)$.  These monomials will then form a monomial basis for $K[\Pb^n(q)]$.  For a monomial to be well-defined on points in projective space it must be invariant under scalar multiplication on the coordinates.  By Fermat's little theorem this happens only when $q-1$ divides total degree of the monomial.

However, not all such monomials are unique when restricted to $\Pb^n(q)$.  The kernel of the restriction map is generated by $\langle X_i^q-X_i\rangle_{i=0}^n$, along with $\delta_0$, the characteristic function of $0\in V(q)$.   The polynomials $X_i^q-X_i$ correspond to the fact that $c^{q}=c$ for all $c\in \Fb_q$, while $\delta_0$ appears because projective space is defined for homogenous coordinates and is not defined at $0\in V(q)$.  To account for $\delta_0$ in the kernel we set $X_0^{q-1}\cdots X_n^{q-1}=0$.

Before moving forward we introduce multi-index notation to help simplify the notation.  Given a monomial $X_0^{b_0} \cdots X_n^{b_n}$, denote the set of exponents by $\bB = (b_0, \dots, b_n)$ and the monomial itself by $\X^{\bB}$.  The total degree of the monomial is then $|\bB| = \sum_{i=0}^n b_i$. Using multi-index notation we summarize our discussion above by describing a monomial basis $\Mc(q)$ for $K[\Pb^n(q)]$.
  \begin{equation}
  \Mc(q)=\{\X^{\bB} \mid 0\leq b_i\leq q-1, (q-1)| |\bB|, \bB \neq (q-1, \dots, q-1)\}
  \end{equation}
Our main goal  is to determine which polynomials in $K[\Pb^n(q)]$ are $r$-pseudomagic.  In order to do this we will make use of results from the representation theory of the group $\GG=\GL_{n+1}(q)$.

The group $\GG$ has a natural action on the vector space $V(q)$ which extends to an action on $\Pb^n(q)$.  We use the action on $\Pb^n(q)$ to define an action of $\GG$ on $K[\Pb^n(q)]$ by setting $(g\cdot f)(x)=f(g^{-1}x)$ for all $g\in \GG$, $f \in K[\Pb^n(q)]$, and $x\in \PP$.  
Since the action of $\GG$ on $\Pb^n(q)$ sends $r$-flats to $r$-flats, it follows that  $\rMag_0(\Pb^n(q))$ (and hence $\Kerr A_r$) is a sub-representation.  To gain information about the $r$-pseudomagic functions we study the structure of this representation.

To motivate the discussion we begin with a result from classical number theory.

\begin{thm}[Kummer's Theorem (1852)]\label{Kummer}
Let $r$ be the largest natural number so that $p^r$ divides ${n}\choose{k}$, then $r$ is the number of carries involved in the base $p$ sum of $k$ and $n-k$.
\end{thm}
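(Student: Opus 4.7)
The plan is to prove Kummer's Theorem by routing through Legendre's formula for the $p$-adic valuation of a factorial, then translating the resulting digit-sum identity into a carry count.

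First I would recall (or establish from scratch) Legendre's formula
\[
v_p(n!) = \sum_{i=1}^{\infty} \left\lfloor \frac{n}{p^i} \right\rfloor = \frac{n - s_p(n)}{p-1},
\]
where $v_p$ denotes the $p$-adic valuation and $s_p(n)$ is the sum of the base-$p$ digits of $n$. The first equality is a standard counting argument: $\lfloor n/p^i \rfloor$ counts multiples of $p^i$ up to $n$, and summing over $i$ counts each factor $p^i$ exactly once per appearance in $n!$. The second equality follows by writing $n = \sum_j a_j p^j$ in base $p$ and evaluating the telescoping sum.

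Next I would apply this to $\binom{n}{k} = n!/(k!(n-k)!)$ to obtain
\[
v_p\!\left(\binom{n}{k}\right) = \frac{s_p(k) + s_p(n-k) - s_p(n)}{p-1}.
\]
So the theorem reduces to showing that the right-hand side equals the number of carries when adding $k$ and $n-k$ in base $p$.

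The final step, which I expect to be the main (and only nontrivial) obstacle, is the carry-counting identity. I would argue by induction on the number of digits, tracking the addition column by column. Let $k = \sum_j a_j p^j$ and $n-k = \sum_j b_j p^j$, and let $c_j \in \{0,1\}$ be the carry out of position $j$ (with $c_{-1}=0$). Then the $j$-th digit of $n$ is $a_j + b_j + c_{j-1} - p\, c_j$. Summing over $j$ gives
\[
s_p(n) = s_p(k) + s_p(n-k) + \sum_j c_{j-1} - p \sum_j c_j = s_p(k) + s_p(n-k) - (p-1)\sum_j c_j,
\]
since the shifted carry sums differ by one term. Rearranging yields
\[
\sum_j c_j = \frac{s_p(k) + s_p(n-k) - s_p(n)}{p-1},
\]
which is exactly the total number of carries. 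Combining with the formula for $v_p(\binom{n}{k})$ completes the proof. The care needed is only in handling the boundary carry $c_{-1}=0$ and making sure the two carry sums collapse correctly; otherwise the argument is mechanical.
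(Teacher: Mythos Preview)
Your argument is correct: Legendre's formula gives $v_p(n!) = (n - s_p(n))/(p-1)$, whence $v_p\binom{n}{k} = (s_p(k)+s_p(n-k)-s_p(n))/(p-1)$, and your column-by-column carry accounting shows this equals the total number of carries. The only place to be careful is the telescoping of the carry sums, and you handle it correctly: with $c_{-1}=0$ and all carries eventually zero, $\sum_{j\ge 0} c_{j-1} = \sum_{j\ge 0} c_j$, so the difference is $-(p-1)\sum_j c_j$ as claimed.

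There is nothing to compare against in the paper itself: the authors do not prove Kummer's Theorem but simply state it and cite Kummer's original 1852 paper for the proof. Your route through Legendre's digit-sum identity is the standard modern proof and is entirely adequate here.
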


The original proof is due to Kummer \cite{Kummer}.  We need a generalization of Kummer's Theorem to multinomial coefficients that is a result of Dickson \cite{Dickson}.

\begin{thm}\cite{Dickson}\label{Multi_Kummer}
Let $r$ be the largest natural number so that $p^r$ divides the multinomial coefficient ${k_1+\cdots + k_n}\choose{k_1, \ldots, k_n}$, then $r$ is the sum of all the carries involved in the base $p$ sum of $\sum_{i=1}^nk_i $.
\end{thm}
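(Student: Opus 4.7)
The plan is to reduce the multinomial statement to a digit-sum identity via Legendre's formula, which says $v_p(m!) = (m - s_p(m))/(p-1)$, where $s_p(m)$ denotes the sum of the base-$p$ digits of $m$. Writing $S = k_1 + \cdots + k_n$, applying Legendre to each factorial in the multinomial coefficient cancels the $m$-terms and yields
\[
v_p\!\binom{S}{k_1,\ldots,k_n} \;=\; \frac{1}{p-1}\left(\sum_{i=1}^n s_p(k_i) - s_p(S)\right).
\]
So the theorem reduces to showing that $\sum_i s_p(k_i) - s_p(S)$ equals $(p-1)$ times the sum of the carry values in the base-$p$ addition $k_1 + \cdots + k_n$.

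For that identity, I would expand $k_i = \sum_j k_{i,j}\, p^j$ and $S = \sum_j s_j\, p^j$, and let $c_j$ denote the carry into position $j$, with $c_0 = 0$. The defining relation of the addition algorithm at position $j$ is
\[
\sum_{i=1}^n k_{i,j} + c_j \;=\; s_j + p\, c_{j+1}.
\]
Summing this over all $j \geq 0$, the carry terms telescope: $\sum_j (p\, c_{j+1} - c_j) = (p-1)\sum_{j\geq 0} c_j$, since $c_0 = 0$ and all but finitely many $c_j$ vanish. This gives precisely
\[
\sum_{i=1}^n s_p(k_i) - s_p(S) \;=\; (p-1)\sum_{j\geq 0} c_j,
\]
which combined with the Legendre step completes the proof.

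The main subtlety lies in the interpretation of ``the sum of all the carries'': unlike the two-summand case of Kummer's theorem where each $c_j \in \{0,1\}$ so one just counts positions with a carry, here $c_j$ can exceed $1$, and the statement must be read as the sum $\sum_j c_j$ of the carry \emph{values}. As a sanity check I would verify the argument inductively on $n$, using $\binom{S}{k_1,\ldots,k_n} = \binom{S}{k_n}\binom{S-k_n}{k_1,\ldots,k_{n-1}}$, so that the $n=2$ base case is Theorem~\ref{Kummer} and each induction step appends one summand to the base-$p$ addition and adds the corresponding carries; this is slightly more delicate because the carry pattern of the partial sum can differ from the carry pattern of the full sum, which is why I prefer the direct Legendre approach as the main route.
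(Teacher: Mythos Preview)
Your argument is correct. The Legendre-formula route is the standard modern proof: the cancellation $S-\sum_i k_i=0$ leaves exactly $(\sum_i s_p(k_i)-s_p(S))/(p-1)$, and the telescoping of the carry recursion $\sum_i k_{i,j}+c_j=s_j+pc_{j+1}$ identifies this with $\sum_j c_j$. Your remark that the $c_j$ must be counted with multiplicity (not merely as $0$/$1$ flags) is the one point worth emphasizing, and you handle it correctly.

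As for comparison with the paper: there is nothing to compare against. The paper does not supply a proof of this theorem; it is stated as a quotation of Dickson's result and used as a black box in the subsequent analysis of the $\mathcal{G}$-orbit of $X_0^{q-1}$. So your write-up is not an alternative to the paper's argument but rather a self-contained justification of a result the paper imports.
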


Understanding the representation generated by $\GG$ acting on $X_0^{q-1}$ involves a special case of Theorem~\ref{Multi_Kummer}.  For a generic $g\in \GG$ that sends $X_0$ to $\sum_{i=0}^n a_iX_i$ we find that
\begin{equation}\label{expand}
g\cdot X_0^{q-1}=\sum_{b_0+\cdots +b_n=q-1}   {{q-1}\choose{b_0, \ldots, b_n}} (a_0^{b_0}\cdots a_n^{b_n})X_0^{b_0}\cdots X_n^{b_n}
\end{equation}

As we allow $g$ to range over all of $\GG$, the only monomials $\X^{\bB}$ in (\ref{expand}) that appear in the generated space are precisely those where
\[p\not\left\vert {q-1}\choose{b_0, \ldots, b_n} \right.\]

By Theorem~\ref{Multi_Kummer} we can determine when $p$ will divide the multinomial coefficient by analyzing the carries in the base $p$ sum of $b_0 + \cdots + b_n$.  Hence, for any monomial $\X^{\bB} \in K[\Pb^n(q)]$ we let $b_i=b_{i,0}p^0+\cdots + b_{i,(t-1)}p^{t-1}$ be the base $p$ expansion of each $b_i$ and recursively define $r_{j+1}$ as the number so that
\begin{equation}\label{carry}\sum_{i=0}^nb_{i,j} + r_j=(p-1)+r_{j+1}p,\end{equation}
where $r_0=0$.   That is, $r_j$ is exactly the number of carries into the $p^j$ place of the base $p$ sum $b_0+\cdots +b_n$.  So for each monomial $\X^{\bB} \in K[\Pb^n(q)]$ we can associate the list of carries $(r_1, \ldots, r_{t})$.

In our special case above, where $\X^{\bB}$ is in the image of $X_0^{q-1}$ under the action of $\GG$, we have $|\bB| = q-1$ and hence $r_t=0$ as there is no carry into the $p^t$ place.   Now, by Theorem~\ref{Multi_Kummer}, the coefficient on $\X^{\bB}$ is not divisible by $p$ -- and hence $\X^{\bB}$ is in the representation generated by $X_0^{q-1}$ -- if and only if $(r_1, \ldots, r_t)=(0, \ldots, 0)$.

This example is also a special case of the representation theory of $\GL_{n+1}(q)$ acting on $K[\Pb^n(q)]$ developed by Bardoe and Sin in \cite{Bardoe}. To each monomial $\X^{\bB}\in K[\Pb^n(q)]$ we associate a sequence of $t$ natural numbers $\sB=(s_0, \ldots, s_{t-1})$ in the set
\[\HH=\{\sB \mid 1\leq s_i\leq n-1, ps_{i+1}-s_i\leq n(q-1), s_0=\cdots =s_{t-1}\neq n-1\}.\]

Let $\operatorname{Fr}_p$ be the Frobenius map which sends $a\in \Fb_q\mapsto a^p$.  Let $\X^{\bB}\in K[\Pb^n(q)]$.  Since $\operatorname{Fr}_p$ is a field automorphism it acts on $K[\Pb^n(q)]$.  Hence, $\operatorname{Fr}^{-i}_p(\X^{\bB})=\X^{\bB'}\in K[\Pb^n(q)]$, and so $|\bB'|=s_i(q-1)$ for some $s_i\in \Nb$ with $1\leq s_i<n$.  Then to $\X^{\bB}$ we associate the sequence $\sB=(s_i)_{i=0}^{t-1}$.

When $n\leq q-1$,  $ \sB$ can equivalently be defined as $s_i=r_i+1$ for $1 \leq i \leq t-1$ and $s_0=r_t+1$ with $r_i$ defined as in (\ref{carry}). So the set $\HH$ can be thought of as a parametrization of monomials in $K[\Pb^n(q)]$ which generalizes the data used in Kummer's theorem and its generalization.
 


The space $\HH$ also comes equipped with a partial order, $\sB \leq \sB'$ if $s_i\leq s_i'$ for all $0\leq i\leq t-1$.  This partial order allows one to define a ``nice'' collection of representations.  Let $$Y(\sB):=\langle\text{all monomials associated to } \sB'\leq \sB\rangle$$ and let $$\overline{Y}(\sB):=\langle\text{all monomials associated to } \sB'< \sB \rangle$$

The following theorem is a compilation of the results from Bardoe and Sin which we will need.
\begin{thm}\cite{Bardoe}\label{Bardoe}
For each non-constant monomial $\X^{\bB} \in K[\Pb^n(q)]$ there is an associated $\sB \in \HH$ and the following hold:
\begin{enumerate}
    \item $Y(\sB)$ and $\overline{Y}(\sB)$ are sub-representations of $K[\Pb^n(q)]$.  Furthermore, $L(\sB):=Y(\sB)/\overline{Y}(\sB)$ is irreducible.\label{irr_rep}
    \item The set $\{L(\sB) \mid \sB \in \HH\}$ contains all irreducible representations that appear in the composition series of $K[\Pb^n(q)]/\langle1\rangle$. \label{comp_ser}
    \item The space generated by the action of $\GG$ on $\X^{\bB}$ is $Y(\sB)$. \label{mono_gen}
\end{enumerate}
\end{thm}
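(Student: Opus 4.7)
The plan is to tackle part~(\ref{mono_gen}) first, since the orbit computation drives the other two parts. For a monomial $\X^{\bB}$ with associated sequence $\sB \in \HH$, I would act by a generic $g \in \GG$ sending $X_i \mapsto \sum_j a_{ij} X_j$ and expand $g \cdot \X^{\bB}$ via the multinomial theorem, generalizing equation~(\ref{expand}). After grouping by Frobenius twists, so that the $j$-th base-$p$ digits $b_{i,j}$ of the exponents contribute in degree $s_j(q-1)$, the coefficient of a target monomial $\X^{\bB'}$ becomes a product of multinomial coefficients in the variables $a_{ij}$. By Dickson's theorem (Theorem~\ref{Multi_Kummer}), this coefficient is non-zero modulo $p$ precisely when the base-$p$ carry pattern of the target yields a sequence $\sB' \leq \sB$. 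Conversely, by varying the entries of $g$ one can isolate any such $\X^{\bB'}$, identifying the $\GG$-span of $\X^{\bB}$ with $Y(\sB)$.

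Part~(\ref{irr_rep}) then splits into two pieces. That $Y(\sB)$ and $\overline{Y}(\sB)$ are sub-representations is immediate from part~(\ref{mono_gen}), since $\GG$ sends each monomial generator into a subspace of the same form. For irreducibility of $L(\sB) = Y(\sB)/\overline{Y}(\sB)$, the idea is that any non-zero coset must have a representative containing a monomial $\X^{\bB''}$ whose associated sequence is $\sB$ itself (monomials with strictly smaller sequences vanish in the quotient); then part~(\ref{mono_gen}) applied to $\X^{\bB''}$ shows its $\GG$-orbit hits every generator of $Y(\sB)$ modulo $\overline{Y}(\sB)$, so the coset generates all of $L(\sB)$.

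For part~(\ref{comp_ser}), I would argue by exhaustion using Jordan--H\"older. The partial order on $\HH$ gives a filtration of $K[\Pb^n(q)]/\langle 1 \rangle$ by the $Y(\sB)$'s whose successive quotients are exactly the $L(\sB)$, so every composition factor is some $L(\sB)$. A dimension count comparing $\sum_{\sB \in \HH} \dim L(\sB)$ to $\dim K[\Pb^n(q)] - 1$ would then confirm that the list is complete and that distinct $\sB$'s give non-isomorphic factors.

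The main obstacle is the irreducibility claim in part~(\ref{irr_rep}). The orbit computation from part~(\ref{mono_gen}) gives a clean description when starting from a single monomial, but an arbitrary non-zero element of $L(\sB)$ is a linear combination of many monomials, and the relations defining $K[\Pb^n(q)]$ (namely $X_i^q = X_i$ and the vanishing of $X_0^{q-1}\cdots X_n^{q-1}$) together with the generators of $\overline{Y}(\sB)$ interact delicately with the base-$p$ carry combinatorics. Showing that one can always extract a ``leading'' summand with $\sB$-sequence equal to $\sB$, and that no cancellation obstructs applying part~(\ref{mono_gen}) to it, is where the bulk of Bardoe and Sin's modular representation theory machinery is required.
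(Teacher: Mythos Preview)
The paper does not prove this theorem at all: it is stated with the citation \cite{Bardoe} and introduced as ``a compilation of the results from Bardoe and Sin which we will need.'' There is no argument in the paper to compare your proposal against; the authors simply import the statement from the literature. So in that narrow sense your proposal already goes further than the paper does.

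That said, a word of caution about your sketch of part~(\ref{mono_gen}). The paper's worked example treats the single monomial $X_0^{q-1}$, where $g\cdot X_0^{q-1}$ expands as one multinomial sum and Theorem~\ref{Multi_Kummer} applies directly to each coefficient. For a general $\X^{\bB}=X_0^{b_0}\cdots X_n^{b_n}$, however, $g\cdot\X^{\bB}$ is a \emph{product} of $n+1$ multinomial expansions, and the coefficient of a target $\X^{\bB'}$ is a sum over all ways to split each $b_j'$ among contributions from the various factors; moreover the relations $X_i^q=X_i$ mix degrees. A single application of Dickson's theorem no longer decides nonvanishing, and ``grouping by Frobenius twists'' does not by itself reduce to the one-variable case. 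Bardoe and Sin handle this not by a direct carry computation but via the module structure (restricting to parabolic and unipotent subgroups and using the explicit Jordan--H\"older filtration), which is also what underlies the irreducibility in part~(\ref{irr_rep}). Your instinct that the irreducibility is the hard step is correct, but the orbit computation in part~(\ref{mono_gen}) is already less elementary than the $X_0^{q-1}$ example suggests.
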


Returning to the example of $\GG$ acting on $X_0^{q-1}$ we see that the $\sB$ associated to $X_0^{q-1}$ is $\sB=(1, \ldots, 1)$.  Since $\sB$ is the minimal element of $\HH$, we know by Theorem~\ref{Bardoe} that
    $$Y(\sB)=\langle\X^{\bB} \mid \text{The associated } \sB'=(1,\ldots, 1)\rangle.$$
This is the space generated by monomials where $r_0=\cdots = r_{t-1}=0$, which is what we found using Theorem~\ref{Multi_Kummer}.  In this light Theorem~\ref{Bardoe} can be thought of as a partial generalization of Kummer's Theorem, in that it describes when the coefficient of a monomial in the image $g \cdot \X^{\bB}$, 
is not divisible by $p$ in terms of base $p$ expansions of $\bB$.  Specifically, if $\X^{\bB} \in K[\Pb^n(q)]$ is acted on by $g\in \GG$, then $p$ does not divide the coefficient on a monomial $\X^{\bB'}$ in the image if and only if $\sB'\leq \sB$, where $\sB, \sB'\in \HH$ are associated to $\X^{\bB}$ and $\X^{\bB'}$ respectively.  This result will allow us to describe which polynomials are in $\rMag_0(\Pb^n(q))$ in terms of the parameters from $\HH$.

\section{Pseudomagic functions.}
We now show which functions in $K[\Pb^n(q)]$ are $r$-pseudomagic.

\begin{prop}For $0<r<n$ if $m \in K[\Pb^n(q)]$, is a monomial of degree $r(q-1)$, then $m \in \rMag_0(\Pb^n(q))$. \end{prop}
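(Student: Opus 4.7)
The plan is to evaluate the sum of $\X^{\bB}$ over any $r$-flat $W$ directly via a character-sum calculation on $\Fb_q^{r+1}$.  Pick a basis $v^{(0)},\ldots,v^{(r)}$ of the $(r+1)$-dimensional subspace of $V(q)$ corresponding to $W$, so that each projective point of $W$ is represented by exactly $q-1$ nonzero tuples $\lambda=(\lambda_0,\ldots,\lambda_r)\in\Fb_q^{r+1}$ via $p=[\sum_j\lambda_j v^{(j)}]$.  Since $|\bB|=r(q-1)$ is divisible by $q-1$, the value $\X^{\bB}\!\left(\sum_j\lambda_j v^{(j)}\right)$ is invariant under rescaling of $\lambda$, giving
\[
\sum_{p\sim W}\X^{\bB}(p)\;=\;\frac{1}{q-1}\sum_{\lambda\in\Fb_q^{r+1}}Q(\lambda),
\]
where $Q(\lambda):=\X^{\bB}\!\left(\sum_j\lambda_j v^{(j)}\right)$ is homogeneous of degree $r(q-1)$ in $\lambda_0,\ldots,\lambda_r$ and satisfies $Q(0)=0$.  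Division by $q-1=-1$ is legitimate in characteristic $p$.

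Next, expand $Q=\sum_{\bm{c}} a_{\bm{c}}\,\lambda^{\bm{c}}$ into monomials with $|\bm{c}|=r(q-1)$ (with no upper bound on individual $c_i$).  By linearity it suffices to show
\[
\sum_{\lambda\in\Fb_q^{r+1}}\lambda^{\bm{c}}\;=\;\prod_{i=0}^{r}\sum_{x\in\Fb_q}x^{c_i}\;=\;0
\]
for each such $\bm{c}$.  Using the standard identity $\sum_{x\in\Fb_q}x^k=-1$ when $k>0$ and $(q-1)\mid k$, and $0$ otherwise (the case $k=0$ giving $q\equiv 0\pmod p$), each factor is nonzero only if $c_i$ is a strictly positive multiple of $q-1$, forcing $c_i\ge q-1$.

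The finish is pigeonhole: if $c_i\ge q-1$ for all $r+1$ indices, then $|\bm{c}|\ge(r+1)(q-1)>r(q-1)$, contradicting $|\bm{c}|=r(q-1)$.  So every product vanishes and the total sum is zero, proving $\X^{\bB}\in\rMag_0(\Pb^n(q))$.  The only step requiring care is the $k=0$ case of the character sum, which crucially uses $q\equiv 0\pmod p$; after that, the degree pigeonhole is immediate, and the absence of the upper bound $c_i\le q-1$ on the expanded exponents poses no problem because the character-sum identity is exponent-size agnostic.  An alternative packaging would invoke the transitivity of $\GL_{n+1}(q)$ on $r$-flats to reduce to the coordinate flat $\langle e_0,\ldots,e_r\rangle$, but the above parametrization folds that reduction directly into the computation.
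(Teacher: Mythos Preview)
Your proof is correct and follows essentially the same route as the paper: lift the sum over the $r$-flat to a sum over $\Fb_q^{r+1}$ via a choice of basis, expand the resulting homogeneous polynomial of degree $r(q-1)$ in the parameters, and kill every term using the power-sum identity together with the pigeonhole observation that $r+1$ positive multiples of $q-1$ cannot add to $r(q-1)$. The paper organizes the bookkeeping by expanding in the coordinates $x_{i,j}$ of the basis vectors and then grouping by the row sums $\sum_j a_{i,j}$, whereas you collapse directly to the exponents $c_i$ in $\lambda$; this is a cosmetic difference, and your factored form $\prod_i\sum_{x\in\Fb_q}x^{c_i}$ makes the vanishing a bit more transparent.
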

\begin{proof} Let $m=\X^{\bB}$ where $|\bB|= r(q-1)$.   Instead of viewing $m$ as a function on $\Pb^n(q)$ it will be easier to treat it as a function on $V(q)$, so  we write $\overline{m}$ for $m$ viewed as function on $V(q)$.   For $D\subset V(q)$ set $\overline{m}(D)=\sum_{d\in D}\overline{m}(d)$.  Let $W\in \LL_r$ be an $r$-flat of $\Pb^n(q)$ spanned by $\mathbf{x}_0, \dots, \mathbf{x}_r \in V(q)$ with coordinates $\mathbf{x}_i = [x_{i,0}, \dots, x_{i,n}]$ for each $0 \leq i \leq r$.  Since all $\X^{\bB} \in K[\Pb^n(q)]$ are constructed to be independent of the action of scalars, applying $\overline{m}$ to a 1-flat $\langle \mathbf{x}_i \rangle$ gives
\begin{eqnarray*}
 \overline{m}(\langle\mathbf{x}_i\rangle)&=&\sum_{\alpha\in \Fb_q^*} \overline{m}(\alpha \mathbf{x}_i)= \sum_{\alpha\in \Fb_q^*} \overline{m}(\mathbf{x}_i)\\&=&-\overline{m}(\mathbf{x}_i)=-m(\mathbf{x}_i) \qquad \text{for each $i$.}
\end{eqnarray*}
Since we want $m(W)=0$ for all $W\in\LL_r$, we just need to show $\overline{m}(W)=0$.
\begin{align}
    \overline{m}(W) &= \sum_{(\alpha_0, \dots, \alpha_r)\in \Fb^{r+1}_q} \overline{m}(\alpha_0 \mathbf{x}_0+ \dots + \alpha_r\mathbf{x}_r)\notag\\
    &= \sum_{(\alpha_0,\dots, \alpha_r) \in \Fb^{r+1}_q} \left(\prod_{j=0}^n \left(\sum_{i=0}^r \alpha_i x_{i,j}\right)^{b_j}\right)\label{longform}
\end{align}
Viewing (\ref{longform})  as a polynomial in $x_{i,j}$, the coefficient of $\prod_{j=0}^n \prod_{i=0}^r x_{i,j}^{a_{i,j}}$ is
\begin{equation} \label{coef}
    \sum_{(\alpha_0, \dots, \alpha_r)\in \Fb^{r+1}_q}\left(\prod_{j=0}^n {{b_j}\choose{a_{0,j}, \dots, a_{r,j}}}\right) \prod_{i=0}^r \alpha_i^{(a_{i,0}+\cdots +a_{i,n})},
\end{equation}
where $0\leq a_{i,j} \leq b_j$ and $\sum_{i=0}^r a_{i,j} = b_j$ for all $j$.

We now show that each of these coefficients is zero.  It is a well known fact that $\sum_{\alpha \in \Fb_q} \alpha^k = 0$ if and only if $(q-1) {\not |}~ k$.  Assume there is an $i$ so that $(q-1)$ does not divide $A=\sum_{j=0}^n a_{i,j}$.  Without loss of generality let $i=0$, then the coefficient becomes
   \begin{equation} \label{nodiv}
    \sum_{(\alpha_1, \dots, \alpha_r)\in \Fb^{r}_q}C\left(\prod_{i=1}^r \alpha_i^{(a_{i,0}+\cdots +a_{i,n})}\right)\left(\sum_{\alpha_0\in \Fb_q} \alpha_0^A\right)=0,
   \end{equation}
where \[C= \left(\prod_{j=0}^n {{b_j}\choose{a_{0,j}, \dots, a_{r,j}}}\right).\]
Hence, we need only to consider the cases where $(q-1)$ divides $\sum_{j=0}^n a_{i,j}$ for all $i$.  Since the total degree of our polynomial is equal to $r(q-1)$ it follows there exists some $i$,  $0 \leq i \leq r$ such that $\sum_{j=0}^n a_{i,j} = 0$.  Suppose, without loss of generality, that $\sum_{j=0}^n a_{0,j} = 0$.  Then our coefficient becomes
$$\sum_{\alpha_0 \in \Fb_q} \left(\sum_{(\alpha_1, \dots, \alpha_r)\in \Fb^r_q}\left(\prod_{j=0}^n {{b_j}\choose{a_{0,j}, \dots, a_{r,j}}}\right) \prod_{i=1}^r \alpha_i^{(a_{i,1}+a_{i,2}+\cdots +a_{i,n})}\right).$$
The interior portion is constant with respect to the choice of $\alpha_0$, hence we are summing a constant $q$ times, which gives 0 in $\Fb_q$.  Since $m$ is identically zero when summed over any $r$-flat $W$, it follows that $m \in \rMag_0(\Pb^n(q))$ as desired.
\end{proof}

Polynomials of degree $r(q-1)$ are not all the $r$-pseudomagic functions, but in some sense all of them are generated from these functions.  The Frobenius map $\text{Fr}_p:x\mapsto x^p$ is a field automorphism, and thus composing it with any $r$-pseudomagic function will result in another $r$-pseudomagic function.  By construction the Frobenius map cyclically permutes the sequence $\sB$.  For $m\in K[\Pb^n(q)]$ a monomial with degree $r(q-1)$ and associated sequence $\sB \in\HH$, we have by construction of $\sB$ that $s_0=r$.  Applying the Frobenius map to $m$ will thus result in a monomial $m'$ that has $r$ in the associated sequence $\sB' \in \HH$.  Furthermore, if $m \in \rMag_0(\Pb^n(q))$, then $m \in r'\operatorname{-Mag}_0(\Pb^n(q))$ for all $r< r' < n$ as well.  This follows because each point in a fixed $r'$-flat $W$ will be incident to exactly $\ell$ $r$-flats in $W$, where $\ell \equiv 1\mod p$.  So  summing over all $r$-flats within a given $r'$-flat is the same as summing over the $r'$-flat.  We can sum up our discussion with the following corollary:


\begin{cor}\label{r_in_s}
If $m \in K[\Pb^n(q)]$ is a monomial associated to $\sB \in \HH$ and there is an $i$ so that $s_i \leq r$, then $m\in \rMag_0(\Pb^n(q))$.
\end{cor}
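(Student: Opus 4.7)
My plan is to leverage the three observations laid out in the paragraph immediately preceding the corollary—that Frobenius preserves the $r$-pseudomagic property, that it cyclically permutes the associated sequence $\sB$, and that pseudomagic at level $s$ implies pseudomagic at every higher level $s < r' < n$—to reduce the corollary directly to the previous proposition.

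Starting with a monomial $m = \X^{\bB}$ associated to $\sB = (s_0, \ldots, s_{t-1}) \in \HH$ together with an index $i$ satisfying $s_i \leq r$, I would first apply the inverse Frobenius $i$ times to produce $m' = \operatorname{Fr}^{-i}_p(m) \in K[\Pb^n(q)]$. By the very definition of the associated sequence, the exponent vector $\bB'$ of $m'$ satisfies $|\bB'| = s_i(q-1)$, so $m'$ is a monomial of total degree $s_i(q-1)$. The preceding proposition then gives $m' \in s_i\operatorname{-Mag}_0(\Pb^n(q))$.

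If $s_i < r$, the extension observation (each point of a fixed $r$-flat is incident to exactly $\ell \equiv 1 \pmod p$ many $s_i$-flats inside it) upgrades this to $m' \in \rMag_0(\Pb^n(q))$; if $s_i = r$ there is nothing further to do. Finally, to transfer the conclusion from $m'$ back to $m = \operatorname{Fr}^{i}_p(m')$, I would use that $\operatorname{Fr}_p$ is a field automorphism and therefore commutes with finite sums, so for any $r$-flat $W$,
\[
\sum_{p \in W} m(p) \;=\; \operatorname{Fr}^{i}_p\!\left(\sum_{p \in W} m'(p)\right) \;=\; \operatorname{Fr}^{i}_p(0) \;=\; 0,
\]
proving $m \in \rMag_0(\Pb^n(q))$.

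Because every ingredient is already in place, the argument is essentially an assembly of previously established facts rather than a new calculation. The only subtlety that could trip one up is keeping track of the direction of the cyclic shift: one must verify that $\operatorname{Fr}^{-i}_p$ really does bring the entry $s_i$ into position $0$, which matches the convention used to define $\sB$ via $|\operatorname{Fr}^{-i}_p(\X^{\bB})| = s_i(q-1)$. Beyond this bookkeeping, there is no genuine obstacle.
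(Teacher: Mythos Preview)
Your proposal is correct and follows exactly the approach the paper lays out in the paragraph preceding the corollary: apply an appropriate power of Frobenius to reduce to a monomial of degree $s_i(q-1)$, invoke the proposition, lift to level $r$ via the incidence count, and use that Frobenius preserves zero sums to transfer back. The paper presents the corollary as a summary of that discussion rather than giving a separate formal proof, so your write-up simply makes the assembly explicit.
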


To finish we show the condition in Corollary~\ref{r_in_s} is sufficient.  That is, if $m \in K[\Pb^n(q)]$ is a monomial associated to $\sB \in \HH$ and $s_i>r$ for all $0\leq i\leq t-1$ then  $m \not\in \rMag_0(\Pb^n(q))$.

We analyze the monomial $m_r=X_0^{q-1}\cdots X_r^{q-1}$ which is associated to $(r+1,\ldots, r+1)\in \HH$.   Consider the $r$-flat $W\subset V(q)$ generated by $e_0, \ldots, e_r$.  For $v=\alpha_0e_0+\cdots + \alpha_re_r\in W$ with $\alpha_i\in \Fb_q$ we have
\[m_r(v)=
\begin{cases} 0 & \text{if } \alpha_j=0 \text{ for some j}\\
1 & \text{else}
\end{cases}\]
and so we have
 \[m_r(W)=-\sum_{(\alpha_0,\ldots, \alpha_r)\in (\Fb_q^*)^{r+1}} 1=-|(\Fb_q^*)^{r+1}|=-(q-1)^{r+1}\neq 0\in \Fb_q\]
Hence $m_r\not\in \rMag_0(\Pb^n(q))$.  However, it is not in $\rMag(\Pb^n(q))$ either, since $m_r$ is zero on every point in the $r$-flat $W'$ generated by $e_1, \ldots, e_{r+1}$ since the coefficient of $e_0$ is always $0$ for $v\in W'$.

Now let $\sB\in \HH$ with $s_i>r$ for all $i$, and let $m$ be a monomial associated to $\sB$.  If we assume that $m\in \rMag_0(\Pb^n(q))$ then 
by Theorem~\ref{Bardoe}, $Y(\sB)\subset \rMag_0(\Pb^n(q))$.  This implies that $Y(r+1, \ldots, r+1)\subset Y(\sB)\subset \rMag_0(\Pb^n(q))$ and hence $m_r\in \rMag_0(\Pb^n(q))$, which we just showed to be false.  Summarizing our discussion we have our main theorem characterizing $\rMag_0(\Pb^n(q))$.

\begin{thm}\label{Main}
If $m\in K[\Pb^n(q)]$ is a monomial associated to $\sB\in \HH$, then $m \in \rMag_0(\Pb^n(q))$ if and only if $\operatorname{min}\{s_0, \dots, s_n\}\leq r$.
\end{thm}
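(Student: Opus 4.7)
The statement is an if-and-only-if, and one direction is already in hand: the implication $\min\{s_0,\ldots,s_{t-1}\}\leq r \Rightarrow m\in\rMag_0(\Pb^n(q))$ is precisely Corollary~\ref{r_in_s}, which follows by combining the degree-$r(q-1)$ computation of the preceding proposition with the fact that $\operatorname{Fr}_p$ cyclically permutes $\sB$ and that an $r'$-flat ($r<r'<n$) sums in a controlled way over its internal $r$-flats. So the plan focuses on the other direction: assuming $s_i>r$ for every $i$, I want to show $m\not\in\rMag_0(\Pb^n(q))$.

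The strategy is to leverage the representation theory of $\GG=\GL_{n+1}(q)$ to reduce the general case to a single explicit test monomial. Since $\GG$ permutes $r$-flats, the space $\rMag_0(\Pb^n(q))$ is a $\GG$-sub-representation of $K[\Pb^n(q)]$. So if $m\in\rMag_0(\Pb^n(q))$, then by Theorem~\ref{Bardoe}(\ref{mono_gen}) the entire cyclic module generated by $m$, namely $Y(\sB)$, lies in $\rMag_0(\Pb^n(q))$. Under the hypothesis $s_i>r$ for all $i$, the sequence $(r+1,\ldots,r+1)$ satisfies $(r+1,\ldots,r+1)\leq \sB$ componentwise, and so $Y(r+1,\ldots,r+1)\subset Y(\sB)\subset\rMag_0(\Pb^n(q))$. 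Thus it suffices to exhibit a single monomial associated to $(r+1,\ldots,r+1)$ which is demonstrably not in $\rMag_0(\Pb^n(q))$.

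The natural candidate is $m_r=X_0^{q-1}X_1^{q-1}\cdots X_r^{q-1}$; its multidegree is $(q-1,\ldots,q-1,0,\ldots,0)$, which has total degree $(r+1)(q-1)$, and a direct check of the base-$p$ carry recipe shows its associated sequence is indeed $(r+1,\ldots,r+1)\in\HH$. To see $m_r\not\in\rMag_0(\Pb^n(q))$, I would compute the sum over the specific $r$-flat $W$ spanned by $e_0,\ldots,e_r$: a vector $v=\alpha_0 e_0+\cdots+\alpha_r e_r$ in $W$ gives $m_r(v)=\alpha_0^{q-1}\cdots\alpha_r^{q-1}$, which is $1$ when every $\alpha_i\in\Fb_q^*$ and $0$ otherwise. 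Passing from the sum over $V(q)$ to the sum over the projective $r$-flat (dividing out by scalars as the earlier proposition does) yields a sum equal to $\pm(q-1)^{r+1}$ in $\Fb_q$, which is nonzero because $q-1\not\equiv 0\pmod p$. This is the key concrete computation, and it contradicts membership in $\rMag_0(\Pb^n(q))$.

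The main obstacle is really conceptual rather than computational: one has to recognize that the irreducible structure provided by Theorem~\ref{Bardoe} turns a statement about all monomials associated to $\sB$ into a statement about the single minimal sequence $(r+1,\ldots,r+1)$ dominated by $\sB$ under the partial order on $\HH$. Once that reduction is in place, the only technical ingredient is the explicit evaluation of $m_r$ on a judiciously chosen $r$-flat, which is elementary. Combining the two directions yields the characterization: $m\in\rMag_0(\Pb^n(q))$ if and only if the associated sequence $\sB$ has some coordinate $\leq r$.
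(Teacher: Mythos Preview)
Your proposal is correct and follows essentially the same approach as the paper: one direction is Corollary~\ref{r_in_s}, and for the converse you use exactly the same test monomial $m_r=X_0^{q-1}\cdots X_r^{q-1}$, the same $r$-flat spanned by $e_0,\ldots,e_r$, the same evaluation $-(q-1)^{r+1}\neq 0$, and the same reduction via Theorem~\ref{Bardoe}(\ref{mono_gen}) and the inclusion $Y(r+1,\ldots,r+1)\subset Y(\sB)$. The only differences are expository.
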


\section{Conclusions and further questions.}
Theorem~\ref{Main} helps give a geometric interpretation to the combinatorial set $\HH$.  The minimum of the terms in $\sB\in \HH$ gives geometric information about the functions $f\in Y(\sB)$.  Since $K[\Pb^n(q)]$ is a geometric object we expect there is a way to geometrically distinguish representations $Y(\sB)$ and $Y(\sB')$ when $\sB$ and $\sB'$ have the same minimum.

We would also like to see a deeper connection  of the set $\HH$ to a generalization Kummer's theorem.  As of now we can only use $\HH$ to describe when a prime $p$ divides a coefficient  of a term in the image of a monomial under the action of $\GG$.  Given that the data $\HH$ provides is closely related to what is needed for  Kummer's theorem we expect that one can expand on Theorem~\ref{Bardoe} to determine the exact power of $p$ dividing a coefficient  of a term in the image of a monomial under the action of $\GG$.

Finally, our original goal was to find functions which are $r$-magic for projective space (injective $r$-pseudomagic functions).  We know there are no $r$-magic functions into $\Fb_q$ since $|\Pb^n(q)|=(q^{n+1}-1)/(q-1)$, but there is a chance that such a function exists mapping into $\Fb_q^n$.  That is, it is theoretically possible for $[\rMag_0(\Pb^n(q))]^n$ to contain an injective function.  It is our hope that our main result, Theorem~\ref{Main}, which characterizes $\rMag_0(\Pb^n(q))$ in terms of polynomials will help lead to a concrete construction of such a function.

\renewcommand{\bibname}{\textsc{references}} 
\bibliographystyle{elsart-num-sort}	
\bibliography{references}	

\end{document}